\numberwithin{equation}{section}
\theoremstyle{plain}
\newtheorem{thm}{Theorem}[section]
\newtheorem{coro}[thm]{Corollary}
\newtheorem{prop}[thm]{Proposition}
\newtheorem{lem}[thm]{Lemma}
\newtheorem{defi}[thm]{Definition}
\theoremstyle{definition}
\theoremstyle{remark}
\newtheorem{rem}[thm]{Remark}
\newcommand\E{{\mathbb E}}
\newcommand{\R}{\mathbb{R}}
\newcommand\pref[1]{(\ref{#1})}
\let \eps\varepsilon
\newcommand\tr{\mathop{\mathrm{Tr}}\nolimits}
\def\<#1,#2>{\left<#1,#2\right>}
\newcommand\PP{{\mathbb P}}
\begin{document}

\title{Exponential convergence for a convexifying equation and a non-autonomous gradient flow for global minimization}
\author{G. Carlier \thanks{{\scriptsize CEREMADE, UMR CNRS 7534,
Universit\'e Paris IX Dauphine, Pl. de Lattre de Tassigny, 75775 Paris Cedex
16, France. \texttt{carlier@ceremade.dauphine.fr}}}, A. Galichon \thanks{
{\scriptsize D\'epartement d'Economie, Ecole polytechnique, 91128 Palaiseau cedex. \texttt{\
alfred.galichon@polytechnique.edu}. Galichon gratefully acknowledges support from Chaire EDF-Calyon \textquotedblleft Finance and D\'{e}veloppement Durable\textquotedblright\ and FiME, Laboratoire de Finance des March\'es de l'Energie (www.fime-lab.org),  and Chaire Axa \textquotedblleft Assurance et
Risques Majeurs\textquotedblright.  %Both authors thank Jean-Bernard Baillon and Nizar Touzi for helpful discussions. 
}}}
\maketitle

\begin{abstract}
We consider an evolution equation similar to that introduced by Vese in \cite%
{vese} and whose solution converges in large time to the convex envelope of
the initial datum. We give a stochastic control representation for the
solution from which we deduce, under quite general assumptions that the
convergence in the Lipschitz norm is in fact exponential in time. We then
introduce a non-autonomous gradient flow and prove that its trajectories all
converge to minimizers of the convex envelope.
\end{abstract}

\textbf{Keywords:} convex envelope, viscosity solutions, stochastic control
representation, non-autonomous gradient flows, global minimization.

%\newpage

\section{Introduction}

In an interesting paper \cite{vese}, L.Vese considered the following PDE:
\begin{equation}  \label{vese0}
\partial_t u= \sqrt{1+ \vert \nabla u \vert^2} \min(0, \lambda_1(D^2 u)), \;
u\vert_{t=0}= u_0
\end{equation}
where $\lambda_1(D^2 u)$ denotes the smallest eigenvalue of the Hessian
matrix $D^2 u$. Vese proved, under quite general assumptions on the initial
condition $u_0$, that the viscosity solution of (\ref{vese0}) converges as $%
t\to \infty$ to $u_0^{**}$ the convex envelope of $u_0$. Starting from this
result, Vese developed an original and purely PDE approach to approximate
convex envelopes (which is in general a delicate problem as soon as the
space dimension is larger than $2$). More recently, A. Oberman \cite%
{oberman1}, \cite{oberman2}, \cite{os}, noticed that the convex envelope can
be directly characterized via a nonlinear elliptic PDE of obstacle type and
developed this idea for numerical computation of convex envelopes as well.
As noticed by Oberman, the solution of the PDE he introduced naturally has a
stochastic control representation. This is of course also the case for the
evolutionary equation of L.Vese and, as we shall see, this representation
will turn out to be very useful to obtain convergence estimates.

\smallskip

In the present paper, we will focus on an evolution equation similar to (\ref%
{vese0}) and will study some of its properties thanks to the stochastic
control representation of the solution. Under natural assumptions on the
initial datum, our first main result is that $u(t,.)$ converges to the
convex envelope of $u_0$ exponentially fast in the Lipschitz norm. From this
convergence, we deduce that trajectories of the non-autonomous gradient flow
\begin{equation*}
\dot x(t)=-\nabla u(t,x(t))
\end{equation*}
all converge to a minimizer of the convex envelope. 
%We believe that this
%second result has potential applications in global (nonconvex) minimization
%and we plan to develop new numerical methods, based on such non-autonomous
%gradient flows in the near future.

\smallskip

The paper is organized as follows. In section \ref{sec1}, we introduce the
\emph{convexifying evolution equation} and recall some basic facts about
convex envelopes. In section \ref{scr}, we give a stochastic control
representation for the solution of the convexifying evolution equation.
Section \ref{regpu} gives some regularity properties of the solution. Our
exponential convergence result is then proved in section \ref{expco} by
simple probabilistic arguments. Finally, convergence of the trajectories of
the non-autonomous gradient flow are proved in section \ref{nagf}.

\section{A convexifying evolution equation}

\label{sec1}

In the present paper, we will consider a slight variant of (\ref{vese0}),
namely:
\begin{equation}  \label{cee}
\partial_t u(t,x)= \min(0, \lambda_1(D^2 u(t,x))), \; (t,x)\in
(0;\infty)\times \mathbb{R}^d, \; u\vert_{t=0}= u_0
\end{equation}
In the sequel, we shall refer to (\ref{cee}) as the \emph{convexifying
evolution equation}. Following the same arguments of the proof of Vese \cite%
{vese} (also see remark \ref{rem1} below), one can prove under mild
assumptions on $u_0$ that the solution converges pointwise to the convex
envelope $u_0^{**}$ of the initial condition. Our aim will be to quantify
this convergence and this goal will be achieved rather easily by using a
stochastic representation formula for the solution of (\ref{cee}). Before we
do so, let us recall some basic facts about the convex envelope.

Given a continuous (say) and bounded from below function $u_0$ defined on $%
\mathbb{R}^d$, the convex envelope of $u_0^{**}$ is the largest convex
function that is everywhere below $u_0$. The convex envelope is a very
natural object in many contexts and in particular in optimization since $u_0$
and $u_0^{**}$ have the same infimum but $u_0^{**}$ is in principle much
simpler to minimize since it is convex. One can also define $u_0^{**}$ as
the supremum of all affine functions that are below $u_0$ and thus define $%
u_0^{**}$ as the ``Legendre Transform of the Legendre Transform'' of $u_0$
(and this is where the notation ``$^{**}$'' comes from). Rather than iterating
the Legendre transform, let us recall the well-known formula:
\begin{equation}  \label{envconvex}
u_0^{**}(x)=\inf\left\{ \sum_{i=1}^{d+1} \lambda_i u_0(x_i) \; : \;
\lambda_i\geq 0, \; \sum_{i=1}^{d+1} \lambda_i=1, \;
\sum_{i=1}^{d+1}\lambda_i x_i=x\right\}, \; \forall x\in \mathbb{R}^d
\end{equation}
(the fact that one can restrict to $d+1$ points follows from
Carath\'eodory's theorem) which can also be written in probalistic terms as
\begin{equation}  \label{prob}
u_0^{**}(x)=\inf \Big\{ {\mathbb{E}}(u_0(x+X)) \; : \: {\mathbb{E}}(X)=0%
\Big\}.
\end{equation}
The latter formula strongly suggests that a good approximation for the
convex envelope should be
\begin{equation}  \label{markov}
u(t,x):=\inf_{\sigma \; : \; \vert \sigma \vert \leq 1} \Big\{ {\mathbb{E}}%
\Big(u_0(x+\int_0^t \sqrt{2} \sigma_s dW_s)\Big)\Big\}
\end{equation}
for large $t$ where $(W_s)_{s\geq 0}$ is a standard Brownian motion, $%
\sigma_s$ is a $d\times d$-matrix valued process that is adapted to the
Brownian filtration and $\vert \sigma\vert$ stands for the matrix norm $%
\vert \sigma\vert:=\sqrt{\mathop{\mathrm{Tr}}\nolimits(\sigma \sigma^T)}$.

\smallskip

In order to keep things as elementary as possible, from now on, we shall
always assume that $u_0$ satisfies:
\begin{equation}  \label{hypsuru0}
u_0\in C^{1,1}(\mathbb{R}^d), \; \lim_{\vert x\vert \to \infty} \frac{u_0(x)%
} {\vert x \vert}=+\infty, \; \exists R_0>0 \mbox{ : } u_0=u_0^{**}
\mbox{
outside $\overline{B}_{R_0}$}.
\end{equation}
The coercivity assumption guarantees that the infimum in formula (\ref%
{envconvex}) is actually achieved. The assumption that $u_0$ is $C^{1,1}$
implies that so is $u_0^{**}$ (see \cite{Kir}) and we will see that it also
implies that $u(t,.)$ remains $C^{1,1}$. Finally, the assumption that $u_0$
and $u_0^{**}$ agree outside of some ball, eventhough not as essential as
the previous ones, will be convenient and allow us to work mainly on a ball
instead of on the whole space.

\section{Stochastic control representation}

\label{scr}

As we shall see (but this should already be clear to stochastic
control-oriented readers), the value function of (\ref{markov}) is in fact
characterized by the PDE:
\begin{equation}  \label{edp}
\partial_t v=\min(0, \lambda_1(D^2v))
\end{equation}
in the viscosity sense that we now recall (for the sake of simplicity, we
will restrict ourselves to the framework of continuous solutions which is
sufficient in our context):

\begin{defi}
\label{defvisc} Let $\Omega$ be some open subset of $\mathbb{R}^d$ and let $%
v $ be continuous on $(0,+\infty)\times \Omega$, then $v$ is :

\begin{itemize}
\item a viscosity subsolution of (\ref{edp}) on $(0, +\infty)\times \Omega$
if for every smooth function $\varphi\in C^2((0,+\infty)\times \Omega)$ and
every $(t_0, x_0)\in (0, +\infty)\times \Omega$ such that $%
(u-\varphi)(t_0,x_0)=\max_{(0, +\infty)\times \Omega} (u-\varphi)$ one has
\begin{equation*}
\partial_t \varphi(t_0, x_0)\leq \min(0, \lambda_1(D^2 \varphi(t_0,x_0))),
\end{equation*}

\item a viscosity supersolution of (\ref{edp}) on $(0, +\infty)\times \Omega$
if for every smooth function $\varphi\in C^2((0,+\infty)\times \Omega)$ and
every $(t_0, x_0)\in (0, +\infty)\times \Omega$ such that $%
(u-\varphi)(t_0,x_0)=\min_{(0, +\infty)\times \Omega} (u-\varphi)$ one has
\begin{equation*}
\partial_t \varphi(t_0, x_0)\geq \min(0, \lambda_1(D^2 \varphi(t_0,x_0))),
\end{equation*}

\item a viscosity subsolution of (\ref{edp}) on $(0, +\infty)\times \Omega$
if it is both a viscosity subsolution and a viscosity supersolution.
\end{itemize}
\end{defi}

We then have the following stochastic representation formula for (\ref{cee}):

\begin{thm}
\label{stochrep} There is a unique continuous function $u$ on $[0,+\infty)\times \R^d$ that agrees with $u_0$ at $t=0$, that is a viscosity solution of (\ref%
{cee}) and that agrees with $u_0^{**}$ outside $B_{R_0}$. It admits the
following representation
\begin{equation}  \label{repformula}
u(t,x)=\inf_{\sigma \; : \; \vert \sigma \vert \leq 1} \Big\{ {\mathbb{E}}%
\Big(u_0(x+\int_0^t \sqrt{2} \sigma_s dW_s)\Big) \Big\}, \; t\geq 0, \; x\in
\mathbb{R}^d
\end{equation}
where $(W_s)_{s\geq 0}$ is a standard Brownian motion and $\vert \sigma\vert$
stands for the matrix norm $\vert \sigma\vert:=\sqrt{\mathop{\mathrm{Tr}}%
\nolimits(\sigma \sigma^T)}$.
\end{thm}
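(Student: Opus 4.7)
My plan is to approach this via standard stochastic control. First I would define $v(t,x)$ as the right-hand side of (\ref{repformula}) and identify (\ref{cee}) as the Hamilton--Jacobi--Bellman equation of the minimization. Indeed, for each admissible control $\sigma$ the process $X^\sigma_s := x + \int_0^s \sqrt 2\, \sigma_r\, dW_r$ is a martingale diffusion with generator $\mathcal L^\sigma \varphi = \tr(\sigma \sigma^T D^2 \varphi)$, so formal dynamic programming gives $\partial_t v = \inf_{|\sigma|\leq 1}\tr(\sigma\sigma^T D^2 v)$. A short algebraic computation identifies this infimum with $\min(0,\lambda_1(D^2 v))$: setting $A := \sigma\sigma^T$, the constraint $|\sigma|^2 \leq 1$ reads $A\succeq 0$ and $\tr(A)\leq 1$, and diagonalizing $D^2 v$ one sees the infimum is attained either at $A = 0$ if $D^2 v\succeq 0$, or at the orthogonal projector onto the eigenspace of $\lambda_1(D^2 v)$ when this eigenvalue is negative.

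Second, I would verify by a standard Dynamic Programming Principle argument (following Fleming--Soner or Touzi) that $v$ is a viscosity solution of (\ref{cee}) on $(0,+\infty)\times \R^d$. The initial condition $v(0,x) = u_0(x)$ is immediate from the choice $\sigma \equiv 0$, which also yields the upper bound $v(t,x) \leq u_0(x)$ everywhere. Joint continuity of $v$ in $(t,x)$ would come from the local Lipschitz regularity of $u_0$ combined with the elementary moment bound $\E\bigl|\int_0^t \sqrt 2 \sigma_s\, dW_s\bigr|^2 \leq 2t$; the coercivity hypothesis in (\ref{hypsuru0}) provides the integrability needed to pass to the limit inside the infimum.

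To see that $v$ agrees with $u_0^{**}$ outside $\overline{B_{R_0}}$, the upper bound $v(t,x)\leq u_0(x) = u_0^{**}(x)$ is the zero-control bound. The matching lower bound uses $u_0 \geq u_0^{**}$ everywhere together with Jensen's inequality applied to the convex function $u_0^{**}$ and the martingale $X^\sigma_t$: for every admissible $\sigma$,
\begin{equation*}
\E\, u_0(X^\sigma_t) \;\geq\; \E\, u_0^{**}(X^\sigma_t) \;\geq\; u_0^{**}\bigl(\E X^\sigma_t\bigr) \;=\; u_0^{**}(x),
\end{equation*}
so that $v(t,x) \geq u_0^{**}(x)$ holds globally and hence coincides with $u_0(x)$ on $\R^d\setminus \overline{B_{R_0}}$.

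Uniqueness, finally, rests on a comparison principle. The nonlinearity $F(M) := -\min(0,\lambda_1(M))$ is continuous and degenerate elliptic (non-increasing in $M$ for the Loewner order), so $\partial_t u + F(D^2 u) = 0$ falls into the standard framework of the Crandall--Ishii--Lions theory. I would recast the problem on the bounded parabolic cylinder $(0,T)\times B_{R_0}$ with initial datum $u_0$ and Dirichlet datum $u_0^{**}$ on the lateral boundary, where the usual parabolic comparison principle applies and forces uniqueness; the prescribed behaviour outside $\overline{B_{R_0}}$ then extends uniqueness to the whole space. I expect this comparison step to be the main technical obstacle, though essentially routine given the structural assumptions; an alternative would be to work on all of $\R^d$ and use the coercivity in (\ref{hypsuru0}) to control growth at infinity.
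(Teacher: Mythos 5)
Your proposal follows essentially the same route as the paper: the algebraic identity $\min(0,\lambda_1(S))=\min_{|\sigma|\le 1}\tr(\sigma\sigma^T S)$ identifies (\ref{cee}) as the HJB equation of (\ref{repformula}), the viscosity-solution property is delegated to standard stochastic control theory, and uniqueness rests on a comparison principle (the paper invokes one on all of $\R^d$, while you localize to a Dirichlet problem on a cylinder over $B_{R_0}$ --- both are legitimate). The only substantive difference is that the paper defers continuity to Section \ref{regpu}, where an It\^o-formula argument yields the sharper Lipschitz-in-time bound $|u(t,x)-u(s,x)|\le M|t-s|$; also note in passing that the minimizing $A=\sigma\sigma^T$ should be a rank-one projector $e_1\otimes e_1$ onto a single unit eigenvector, since the projector onto the full eigenspace of $\lambda_1$ violates $\tr(A)\le 1$ when the eigenvalue is multiple.
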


\begin{proof}
Recalling that for every symmetric matrix $S$ one has
\[\min(0, \lambda_1(S))=\min_{\vert \sigma\vert\leq 1} \text{Tr} (\sigma \sigma^T S),\]
the fact that formula \pref{repformula} actually defines a viscosity solution is a classical fact from stochastic control theory  (see for instance \cite{fs} or \cite{touzi}) and uniqueness  follows from well-known comparison principles (e.g Theorem 4.1 in \cite{cgg}). Continuity (Lipschitz continuity in fact) of the value function $u$ will be established in section \ref{regpu}.  

\end{proof}

\begin{rem}\label{kentarem}
\textbf{Optimal feedback control.} 
Very formally, if the solution $u$ of the PDE were very well-behaved then, as usual in control theory, one could find an optimal feedback (Markov) control depending on $D^2 u$ (since there is no drift). Introduce a time-dependent vector field $Z=Z(t,y)$, as follows. If $\lambda
_{1}( D^{2}u(t, y)) <0$, then let $Z(t, y)$ be a unit eigenvector associated to $\lambda_{1}( D^{2} u(t,y))$  and let $Z(t,y)=0$ otherwise. So that in any case:
\[ \tr(\sigma(t,y) \sigma(t,y)^T D^2 u(t,y))= \min(0, \lambda_1(D^2 u(t, y)), \mbox{ and } \vert \sigma(t,y)\vert\leq 1, \]
where $\sigma$ is the projector
 \[ \sigma(t,y):=Z(t,y)\otimes Z(t,y).\]

Of course the problem is that $\sigma$ is not well-defined: not only $u$ does not need to be $C^2$ but also it may be the case that $\lambda_1 <0$ has multiplicity larger than $2$. Ignoring those serious issues, let us consider the SDE:
\[d Y_t=\sqrt{2}\sigma(t, Y_t) dW_t \; \:\]
then $\sigma$ is (again very formally) an optimal feedback control.  
We then have for $t>s\geq 0$%
\begin{equation*}
u\left( t, y\right) =\E\left[ u\left( s,Y_{t}\right) |Y_{s}=y\right] ,
\end{equation*}%
and, formally, the envelope theorem  gives
\begin{eqnarray*}
\nabla u\left( t, y\right) &=&\E\left[ \nabla u\left( s,Y_{t}\right) |Y_{s}=y%
\right].
\end{eqnarray*}
Finally, notice that the drift of $u(t,Y_t)$ is the nonpositive quantity given by
\[\partial_t u(t,Y_t)+  \tr(\sigma \sigma ^T D^2 u(t, Y_t))=2 \min(0, \lambda_1(D^2 u(t, Y_t)).\]

\end{rem}

\begin{rem}
\label{rem1} One has $u_0^{**}\leq u(t,.)\leq u_0$ and $u(.,x)$ is
nonincreasing and thus monotonically converges to $v(x):=\lim_{t\to \infty}
u(t,x)=\inf_{t>0} u(t,x)$. Now, as shown by Vese in \cite{vese}, $v$ is
necessarily convex (it is a viscosity solution of the stationary equation)
and since $u_0^{**}\leq u(t,.)\leq u_0$ this gives $v=u_0^{**}$. In other
words, $u$ pointwise monotonically converges to the convex envelope of the
initial condition. Of course, in view of the representation formula (\ref%
{repformula}) and (\ref{envconvex}) this convergence is not surprising. We
shall see in the next sections how (\ref{repformula}) can easily give much
more precise informations and provide in a simple way very strong
convergence estimates.
\end{rem}

\section{Regularity properties of $u$}

\label{regpu}

\begin{lem}
\label{semiconcavecest} If $M>0$ is such that $u_0-\frac{M}{2}\vert .
\vert^2 $ is concave then $u(t,.)-\frac{M}{2} \vert . \vert^2$ is concave
for every $t>0$.
\end{lem}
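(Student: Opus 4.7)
The plan is to exploit the stochastic representation formula \pref{repformula} from Theorem \ref{stochrep} together with the fact that the infimum of concave functions is concave. The key structural observation is that for every admissible process $\sigma$, the random variable
\[ X_t^\sigma := \int_0^t \sqrt{2}\, \sigma_s\, dW_s \]
is a martingale increment from $0$, hence $\E[X_t^\sigma]=0$ and, crucially, its law does not depend on the base point $x$.

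First, I would fix an admissible $\sigma$ and consider the function $v^\sigma(x) := \E[u_0(x+X_t^\sigma)]$. Since by assumption $y \mapsto u_0(y) - \frac{M}{2}|y|^2$ is concave, and since concavity is preserved under taking expectations of parametrized families (the integrand is concave in $x$ for each realization of $X_t^\sigma$), the map
\[ x \longmapsto \E\Bigl[u_0(x+X_t^\sigma) - \tfrac{M}{2}|x+X_t^\sigma|^2\Bigr] \]
is concave in $x$.

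Second, I would remove the unwanted quadratic term by a direct computation: expanding
\[ |x+X_t^\sigma|^2 = |x|^2 + 2\, x\cdot X_t^\sigma + |X_t^\sigma|^2 \]
and taking expectations, the martingale property $\E[X_t^\sigma]=0$ yields
\[ \E\bigl[|x+X_t^\sigma|^2\bigr] = |x|^2 + \E\bigl[|X_t^\sigma|^2\bigr]. \]
The term $\E[|X_t^\sigma|^2]$ is a constant independent of $x$ (it is bounded by $2dt$ by It\^o's isometry and $|\sigma|\leq 1$). Substituting back shows that $v^\sigma(x) - \frac{M}{2}|x|^2$ differs from a concave function only by an $x$-independent constant, hence is itself concave.

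Finally, taking the infimum over $\sigma$ preserves concavity, so
\[ u(t,x) - \tfrac{M}{2}|x|^2 = \inf_{|\sigma|\leq 1}\Bigl\{ v^\sigma(x) - \tfrac{M}{2}|x|^2 \Bigr\} \]
is concave in $x$, which is the desired conclusion. I do not expect a serious obstacle here: the integrability of $u_0(x+X_t^\sigma)$ is ensured by the $C^{1,1}$ and coercivity hypotheses in \pref{hypsuru0} together with the exponential moments of Brownian motion, so each $v^\sigma$ is finite and the above manipulations are rigorous. The only mild subtlety is that the infimum in \pref{repformula} of possibly $-\infty$ quantities would make the statement vacuous, but this is ruled out by the bound $u(t,x) \geq u_0^{**}(x) > -\infty$ coming from Remark \ref{rem1}.
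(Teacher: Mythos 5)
Your argument is correct and is essentially the proof the paper gives: write $u_0 = (u_0 - \tfrac{M}{2}|\cdot|^2) + \tfrac{M}{2}|\cdot|^2$, use that the stochastic increment is centered so that $\E[|x+X_t^\sigma|^2] = |x|^2 + \E[|X_t^\sigma|^2]$ with the last term independent of $x$, and conclude by taking an infimum of concave functions. The paper phrases this for an arbitrary family of centered square-integrable random variables rather than specifically for the controlled It\^o integrals, but the mechanism is identical.
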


\begin{proof}
Set $v_0:=u_0-\frac{M}{2}\vert . \vert^2$ and let $(X_\alpha)_{\alpha\in A}$ be a family of centered, $\R^d$-valued, square integrable random variables, then define
\[\varphi(x)=\inf_{\alpha\in A} \E(u_0(x+X_\alpha)), \; x\in \R^d\]
we then have
\[\begin{split}
\varphi(x)-\frac{M}{2} \vert x \vert^2 =\inf_{\alpha\in A} \{\E(v_0(x+X_\alpha)+\frac{M}{2} \vert X_\alpha\vert^2\}
\end{split}\]
so that $\varphi-\frac{M}{2} \vert . \vert^2$ is concave as an infimum of concave functions. This proves the desired claim.

\end{proof}

\begin{prop}
\label{c11est} Let $M:= \Vert D^2 u_0\Vert_{\infty}$, then for every $%
(t,s)\in (0, +\infty)$ and every $x\in \mathbb{R}^d$ one has
\begin{equation}  \label{lipent}
\vert u(t,x)-u(s,x)\vert \leq M\vert s-t\vert
\end{equation}
and $u(t,.)$ is $C^{1,1}$ for every $t$ and more precisely, one has $\Vert
D^2 u(t,.)\Vert_{\infty}\leq M$.
\end{prop}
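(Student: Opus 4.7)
Both assertions will be derived from the stochastic representation \pref{repformula}.

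\emph{Time-Lipschitz bound.} For $0\le s<t$, the inequality $u(t,x)\le u(s,x)$ follows because extending by $0$ on $(s,t]$ any $\sigma$ admissible on $[0,s]$ is still admissible on $[0,t]$ and produces the same terminal $X^\sigma_t=X^\sigma_s$. For the reverse bound, fix any admissible $\sigma$ on $[0,t]$ and decompose $X^\sigma_t=X^\sigma_s+Y$ with $Y:=\int_s^t\sqrt 2\,\sigma_r\,dW_r$; since $Y$ is centered conditionally on $\mathcal F_s$, the two-sided Taylor expansion of $u_0\in C^{1,1}$ about $x+X^\sigma_s$ yields
\[
\E[u_0(x+X^\sigma_t)\mid\mathcal F_s]\ \ge\ u_0(x+X^\sigma_s)-\tfrac{M}{2}\,\E[|Y|^2\mid\mathcal F_s].
\]
The It\^o isometry together with $|\sigma|\le 1$ gives $\E[|Y|^2]\le 2(t-s)$; since the restriction of $\sigma$ to $[0,s]$ is admissible for $u(s,x)$, taking expectations and then the infimum over $\sigma$ gives $u(t,x)\ge u(s,x)-M(t-s)$.

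\emph{Spatial $C^{1,1}$ bound.} The hypothesis $\|D^2u_0\|_\infty\le M$ is equivalent to the simultaneous concavity of $u_0-\tfrac{M}{2}|\cdot|^2$ and convexity of $u_0+\tfrac{M}{2}|\cdot|^2$, so Lemma \ref{semiconcavecest} at once yields the semiconcavity half $D^2u(t,\cdot)\le MI$. The matching semiconvexity bound $D^2u(t,\cdot)\ge -MI$ is the real work, because ``inf of convex is not convex'' means the proof of Lemma \ref{semiconcavecest} does not dualize mechanically. I would instead anchor the argument at an $\varepsilon$-optimal control $\sigma^{\varepsilon,x}$ for $u(t,x)$: applying the two-sided $C^{1,1}$ expansion of $u_0$ to the integrand with $a=x+X^{\sigma^{\varepsilon,x}}_t$, $b=y+X^{\sigma^{\varepsilon,x}}_t$, taking expectations, and combining with $u(t,y)\le\E[u_0(y+X^{\sigma^{\varepsilon,x}}_t)]$ yields
\[
u(t,y)\le u(t,x)+p^\varepsilon(x)\cdot(y-x)+\tfrac{M}{2}|y-x|^2+\varepsilon,\qquad p^\varepsilon(x):=\E[\nabla u_0(x+X^{\sigma^{\varepsilon,x}}_t)].
\]
A parallel argument anchored at $y$, together with the limit $\varepsilon\to 0$ and some care about non-uniqueness of the linearization, gives the matching lower bound, so that $|u(t,y)-u(t,x)-p(x)\cdot(y-x)|\le\tfrac{M}{2}|y-x|^2$, which is precisely $u(t,\cdot)\in C^{1,1}$ with $\|D^2u(t,\cdot)\|_\infty\le M$.

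\emph{Main obstacle.} The principal difficulty is exactly this semiconvexity half: the representation is an infimum, so it naturally preserves concavity (giving the upper Hessian bound essentially for free), and the lower Hessian bound must be extracted from an envelope/linearization argument in which the same near-optimal control provides a common base-point for both upper and lower $C^{1,1}$ expansions. An alternative path, which may ultimately be cleaner, is a maximum-principle argument on the PDE that propagates the lower bound $\lambda_1(D^2u_0)\ge -M$ along the flow.
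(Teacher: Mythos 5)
Your proof of the time--Lipschitz estimate \pref{lipent} is correct and is essentially the paper's argument in different clothing: where you Taylor--expand $u_0$ around $x+X^\sigma_s$ and kill the linear term by conditional centering, the paper applies It\^o's formula to $u_0(Y_h)$ and bounds $\tr(\sigma\sigma^T D^2u_0)$ by $M$; both reduce to controlling the quadratic variation $\int_s^t|\sigma_r|^2\,dr\le t-s$. The semiconcavity half of the spatial estimate via Lemma \ref{semiconcavecest} is also exactly what the paper does.

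The semiconvexity half, however, has a genuine gap, and it sits exactly where you suspected. Your argument anchored at $x$ gives, for all $y$, the \emph{upper} bound $u(t,y)\le u(t,x)+p^\eps(x)\cdot(y-x)+\frac M2|y-x|^2+\eps$; the ``parallel argument anchored at $y$'' gives $u(t,x)\le u(t,y)+p^\eps(y)\cdot(x-y)+\frac M2|y-x|^2+\eps$, i.e.\ a \emph{lower} bound for $u(t,y)-u(t,x)$ whose slope is $p^\eps(y)$, not $p^\eps(x)$. Adding the two only yields $(p^\eps(y)-p^\eps(x))\cdot(y-x)\le M|y-x|^2+2\eps$, which is one--sided and is satisfied by any semiconcave function (e.g.\ $-|x|$); it does not force $p$ to be Lipschitz, hence does not give $C^{1,1}$. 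To replace $p^\eps(y)$ by $p^\eps(x)$ in the lower bound you would need precisely the Lipschitz continuity of the gradient you are trying to prove, so the ``care about non-uniqueness of the linearization'' cannot be supplied along these lines: the representation is an infimum, any fixed near-optimal control for $x$ only gives upper bounds at other points $y$, and lower-bounding $u(t,y)$ uniformly over all controls destroys the common base point. This is the standard phenomenon that value functions of control problems inherit semiconcavity but not semiconvexity from the data.

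The route that works is the one you mention only as an ``alternative path,'' and it is what the paper does --- with a twist worth noting. One does not propagate $\lambda_1(D^2u_0)\ge -M$ directly; instead, \pref{lipent} gives $\partial_t u\ge -M$, and the equation $\partial_t u=\min(0,\lambda_1(D^2u))$ then yields $\lambda_1(D^2u(t,\cdot))\ge -M$ in the viscosity sense (time regularity is converted into space regularity through the PDE). The remaining, non-trivial step is to show that $\lambda_1\bigl(D^2(u(t,\cdot)+\tfrac M2|\cdot|^2)\bigr)\ge0$ in the viscosity sense implies actual convexity; the paper proves this by an Alvarez--Lasry--Lions barrier construction (the quadratic test function $\varphi$ on the slab $Q$). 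Your proposal as written omits this entire step, which is the heart of the proposition.
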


\begin{proof}
Let $0<t<s$, we already know that $u(t,.)\geq u(s,.)$. Let us assume for a moment that $u_0$ is smooth and let $x\in \R^d$  and let $\sigma$ be an adapted process with values in the set of matrices with norm less than $1$ such that
\[\E\Big(u_0(x+\int_0^s\sqrt{2} \sigma dW)\Big)\leq u(s,x)+\eps\]
then defining
\[Y_h:=x+\int_0^h \sqrt{2}\sigma dW, \; Z_h:=u_0(Y_h), s\geq h\geq 0\]
thanks to It\^o's formula, we thus get:
\[\begin{split}
u(t,x)&\leq \E(Z_t)\leq u(s,x)+\eps -\E(Z_s-Z_t)\\
 &= u(s,x)+\eps -\E\Big(\int_t^s \tr(\sigma \sigma^T D^2u_0(Y_h))dh\Big)\\
 &\leq  u(s,x)+\eps+M(s-t)
\end{split}\]
and we conclude that \pref{lipent} holds by letting $\eps\to 0^+$. In the general case, one applies the same argument to the regularization $\rho_n \star u_0$ (where $\rho_n$ is, as usual, a sequence of mollifyers) and then passes to the limit to obtain \pref{lipent}.

\smallskip

Using \pref{cee}, we then quite easily obtain that
\[\lambda_1(D^2v(t,x))\geq 0 \mbox{ on } (0,+\infty)\times \R^d, \mbox{ with } v(t,x):=u(t,x)+\frac{M}{2} \vert x \vert^2\]
in the viscosity sense which means that as soon as $\varphi$ is smooth and $v-\varphi$ has a (local or global) maximum at $(t_0,x_0)\in(0,+\infty)\times \R^d$ then $\lambda_1(D^2\varphi(t_0,x_0))\geq 0$. To see that this implies that $v(t,.)$ is convex, we invoke the same arguments as in Lemma  1 in \cite{all}. Assume on the contrary that there are $t_0>0$, $x_0, y_0$ in $\R^d$ and $\lambda \in (0,1)$ such that $v(t_0, \lambda x_0+(1-\lambda)y_0)>\lambda v(t_0, x_0)+(1-\lambda) v(t_0,y_0)$. Without loss of generality, denoting elements of $\R^d$ as $(x_1, x')\in \R\times \R^{d-1}$, we may assume that $y_0=0$, $x_0=(1,0)$ and $v(t_0, 0)=v(t_0, (1,0))<0$. We then choose $h\in (0,t_0)$ and $r>0$ such that
\begin{equation}\label{bord1}
v(t, (0, x'))<0, \; v(t, (1, x'))<0, \forall (t,x') \in [t_0-h, t_0+h] \times  B_r.
 \end{equation}
 We then define
 \[\Omega:=\{(x_1, x')\in (0,1)\times B_r\}, \; Q:=(t_0-h,t_0+h)\times \Omega\]
 and choose $\alpha>0$ such that $v(t_0, (\lambda,0))>\frac{\alpha \lambda (1-\lambda)}{2}$. We then define
 \[\varphi(t, (x_1, x')):=\frac{\alpha}{2} x_1(1-x_1)+\frac{\beta}{2} \vert x'\vert^2+ \frac{\gamma}{2} (t-t_0)^2\]
 with $\beta$ and $\gamma$  chosen so that
 \begin{equation}\label{bord2}
 \beta r^2 \geq 2 \max_{\overline{Q}} v, \; \gamma h^2 \geq 2 \max_{\overline{Q}} v.
 \end{equation}
 We then have $v(t_0, (\lambda,0))-\varphi(t_0, (\lambda,0))>0$ and by \pref{bord1}-\pref{bord2}, $v-\varphi\leq 0$ on $\partial Q$, hence $v-\varphi$ achieves its maximum on $\overline{Q}$ at an interior point of $Q$, but at this point one should have $0\leq \lambda_1(D^2 \varphi)=-\alpha$ which gives the desired contradiction.  This proves that $u(t,.)+\frac{M}{2} \vert . \vert^2$ is convex for every $t$.  Together with lemma \ref{semiconcavecest} this enables us to conclude that $u$ remains semiconvex and semiconcave is hence $C^{1,1}$ with the estimate $\Vert D^2 u(t,.)\Vert_{\infty}\leq M$.
\end{proof}

Proceeding as in the proof of the two previous results and using the fact that the PDE is autonomous, one gets: 

\begin{coro} \label{corsum}
Suppose $u_{0}$ satisfies \pref{hypsuru0}, then  ${{\rm{Essinf}}} \lambda_{1}(D^{2}u( t, .) )$ is nondecreasing with respect to $t$ and  ${{\rm{Esssup}}} \lambda_{d}( D^{2}u( t, .) )$ is
nonincreasing with respect to $t$ (where $\lambda_d$ stands for the largest eigenvalue).
\end{coro}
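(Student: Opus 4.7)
My strategy would be to exploit the autonomy of \pref{cee} and the uniqueness statement in Theorem \ref{stochrep} in order to restart the evolution at an arbitrary time $s\geq 0$, and then to rerun the arguments of Lemma \ref{semiconcavecest} and Proposition \ref{c11est} from this new initial condition. The starting observation is that $u(s,\cdot)$ itself satisfies the standing assumption \pref{hypsuru0}: the $C^{1,1}$ regularity follows from Proposition \ref{c11est}, coercivity is inherited from $u(s,\cdot)\geq u_0^{**}$, and the identity $u(s,\cdot)\equiv u_0^{**}$ outside $\overline{B}_{R_0}$ (supplied by Theorem \ref{stochrep}) implies that $u(s,\cdot)$ agrees with its own convex envelope there. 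Because \pref{cee} is autonomous, the function $(\tau,x)\mapsto u(s+\tau,x)$ is then the unique viscosity solution of \pref{cee} with initial datum $u(s,\cdot)$, and Theorem \ref{stochrep} supplies the dynamic programming identity
\[
u(s+\tau,x)\;=\;\inf_{\sigma:\,|\sigma|\leq 1}\E\left(u\left(s,\,x+\int_0^\tau\sqrt{2}\,\sigma_r\,dW_r\right)\right).
\]

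Given this identity, the two claims follow by essentially mechanical transcriptions. Setting $a:=\ess\lambda_d(D^2u(s,\cdot))$ makes $u(s,\cdot)-\frac{a}{2}|\cdot|^2$ concave; the proof of Lemma \ref{semiconcavecest} applied verbatim to the above representation (with $u_0,M$ replaced by $u(s,\cdot),a$) then yields concavity of $u(s+\tau,\cdot)-\frac{a}{2}|\cdot|^2$, whence $\ess\lambda_d(D^2u(s+\tau,\cdot))\leq a$. For the $\lambda_1$ statement I would let $m:=\ei\lambda_1(D^2u(s,\cdot))$ and split cases. If $m\geq 0$, then $u(s,\cdot)$ is convex, so $\min(0,\lambda_1(D^2u(s,\cdot)))=0$ and the time-constant extension $(\tau,x)\mapsto u(s,x)$ is itself a viscosity solution of \pref{cee} with initial datum $u(s,\cdot)$; uniqueness then forces $u(s+\tau,\cdot)\equiv u(s,\cdot)$ and the essinf is preserved trivially. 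If $m<0$, I choose $M:=-m>0$, so that $u(s,\cdot)+\frac{M}{2}|\cdot|^2$ is convex, and rerun the viscosity/barrier argument in the second half of the proof of Proposition \ref{c11est} with these shifted data (with $u_0$ replaced by $u(s,\cdot)$ and $\|D^2u_0\|_\infty$ replaced by $M$); this gives convexity of $u(s+\tau,\cdot)+\frac{M}{2}|\cdot|^2$, i.e., $\ei\lambda_1(D^2u(s+\tau,\cdot))\geq -M=m$.

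The only genuinely delicate point is the rigorous justification of the dynamic programming identity above; but this is a standard consequence of the autonomy of the PDE combined with the uniqueness in Theorem \ref{stochrep}. Once it is in place, everything else amounts to a direct replay of arguments already carried out, applied to a shifted initial condition.
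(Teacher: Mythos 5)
Your proposal is correct and follows exactly the route the paper intends: the paper gives no proof beyond the one-line indication ``proceeding as in the proof of the two previous results and using the fact that the PDE is autonomous,'' and your argument is a faithful elaboration of precisely that --- restarting at time $s$ via the semigroup property, rerunning Lemma \ref{semiconcavecest} with $a=\ess\lambda_d(D^2u(s,\cdot))$ for the upper bound, and rerunning the second half of Proposition \ref{c11est} with $M=-\ei\lambda_1(D^2u(s,\cdot))$ for the lower bound. The case split at $m\geq 0$ and the verification that $u(s,\cdot)$ again satisfies \pref{hypsuru0} are exactly the details one needs to check, and you handle them correctly.
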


\section{Exponential convergence to the convex envelope}

\label{expco}

Before we state our result concerning the convergence of $u(t,.)$ to $%
u_0^{**}$, we need two elementary lemmas.

\begin{lem}
\label{semiconcave} Let $v$ : $\mathbb{R}^d\to \mathbb{R}$ and $M\geq 0$ be
such that $v+\frac{M}{2} \vert . \vert^2$ is convex, then for every $r>0$
one has:
\begin{equation}  \label{estsemicvex1}
\Vert \nabla v \Vert_{L^{\infty}(B_r)}\leq 2\Big( M\Vert v
\Vert_{L^{\infty}(B_{r+r^{\prime}})}\Big)^{1/2} \mbox {with } r^{\prime}=%
\frac{2}{Mr} \Vert v \Vert_{L^{\infty}(B_{2r})}+\frac{r}{2}.
\end{equation}
\end{lem}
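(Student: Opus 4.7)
The plan is to exploit the semiconvexity of $v$ directly. Since $w := v + \tfrac{M}{2}|\cdot|^2$ is convex, the subgradient inequality for $w$ translates, after completing the square, into the ``reverse semiconvexity'' bound
\[
v(y) \ge v(x) + \nabla v(x)\cdot(y-x) - \frac{M}{2}|y-x|^2,
\]
valid at every point $x$ of differentiability of $v$ (locally almost everywhere, by local Lipschitz regularity of semiconvex functions), and more generally with any subgradient of $w$ in place of $\nabla v(x)$, which legitimizes an $L^\infty$-sup argument. Specializing $y = x + t\,\nabla v(x)/|\nabla v(x)|$ with $t>0$ (the assertion being trivial when $\nabla v(x)=0$) yields the one-parameter pointwise estimate, for every $x\in B_r$ and every $t>0$,
\[
|\nabla v(x)| \le \frac{2\|v\|_{L^\infty(B_{r+t})}}{t} + \frac{Mt}{2}.
\]

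The heart of the proof is a two-step bootstrap on this inequality. First, I would set $t = r$, which gives the crude linear bound
\[
L := \|\nabla v\|_{L^\infty(B_r)} \le \frac{2\|v\|_{L^\infty(B_{2r})}}{r} + \frac{Mr}{2} = M r',
\]
and in particular forces $L/M \le r'$. Next, I re-inject the specific value $t = L/M$ back into the pointwise estimate and take the supremum over $x \in B_r$:
\[
L \le \frac{2M\|v\|_{L^\infty(B_{r+L/M})}}{L} + \frac{L}{2}.
\]
Rearranging gives $L^2 \le 4M\,\|v\|_{L^\infty(B_{r+L/M})}$, and since $L/M \le r'$, the monotonicity of $s\mapsto \|v\|_{L^\infty(B_s)}$ upgrades the right-hand side to $4M\,\|v\|_{L^\infty(B_{r+r')}}$, which is exactly the claim.

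The main conceptual step is realizing that the one-parameter inequality cannot be optimized naively in $t$: the right-hand side depends on $t$ through the domain $B_{r+t}$, so a direct AM--GM argument is circular. The two-pass trick (first use $t=r$ to control $L$ linearly, then re-tune $t = L/M \le r'$ to extract the square-root improvement) is precisely what closes the loop. Once that idea is in place, everything else is routine: the case $\nabla v \equiv 0$ is immediate, and the use of subgradients of $w$ in place of $\nabla v(x)$ handles the (measure-zero) set of non-differentiability points so that the passage to the $L^\infty$ supremum is justified.
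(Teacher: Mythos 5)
Your proposal is correct and follows essentially the same two-pass bootstrap as the paper: first the crude choice $t=r$ to get the linear bound $\|\nabla v\|_{L^\infty(B_r)}\le Mr'$, then re-running the semiconvexity inequality with a step of length comparable to $|\nabla v(x)|/M$ to extract the square-root estimate. The only (immaterial) difference is that the paper uses the pointwise step $h=\nabla v(x)/M$ rather than the uniform $t=L/M$.
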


\begin{proof}
Let $r>0$, $R>0$, for  $x\in B_r$ a point of differentiability of $v$ (which is a.e. the case) and $h\in B_R$ in $\R^d$, one first has
\begin{equation}\label{estsemicvex0}
2 \Vert v \Vert_{L^{\infty}(B_{r+R})} \geq v(x+h)-v(x)\geq \nabla v(x)\cdot h-\frac{M}{2} \vert h \vert^2.
\end{equation}
Taking $r=R$, $h=r \nabla v(x)\vert/\vert \nabla v(x)\vert$ and maximizing with respect to $x\in B_r$ thus gives
\begin{equation}\label{estsemicvex2}
\Vert \nabla v \Vert_{L^{\infty}(B_r)}\leq \frac{2}{r} \Vert v \Vert_{L^{\infty}(B_{2r})}+\frac{Mr}{2}.
\end{equation}
We then take $R=r'$ with $r'$ defined by \pref{estsemicvex1} and set $h=\nabla v(x)/ M$, thanks to \pref{estsemicvex2}, $h\in B_R$, using \pref{estsemicvex0} again, we then get
\[\frac{\vert \nabla v(x) \vert^2}{2M} \leq 2 \Vert v \Vert_{L^{\infty}(B_{r+r'})}, \; \forall x\in B_r\]
which finally gives \pref{estsemicvex1}.

\end{proof}

\begin{lem}
\label{exittime} Let $(B_t)$ be a standard one-dimensional brownian motion,
let $r>0$, $x\in (-r, r)$ and
\begin{equation*}
\tau:=\inf \{t>0 \; : \; x+B_t\notin [-r,r]\}
\end{equation*}
then for every $t>0$, one has
\begin{equation*}
{\mathbb{P}}(\tau \geq t)\leq q(r)^{t-1}, \mbox{ with } q(r):=\frac{1}{\sqrt{%
2\pi}} \int_{-2r}^{2r} e^{-\frac{s^2}{2}} ds.
\end{equation*}
\end{lem}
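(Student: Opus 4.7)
The plan is to use the Markov property of Brownian motion at integer times, reducing the problem to estimating the one-step survival probability. The key observation is that if the particle has not yet exited the interval $[-r,r]$ by time $n$, then a \emph{necessary} condition for it to survive up to time $n+1$ is that $x + B_{n+1}$ still lies in $[-r,r]$; this is a simple Gaussian event whose conditional probability given $\mathcal{F}_n$ can be bounded uniformly by $q(r)$.

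More precisely, fix an integer $n \geq 0$, work on the event $\{\tau \geq n\}$, and set $y := x + B_n$, which by definition of $\tau$ satisfies $|y| \leq r$. Using that the increment $B_{n+1} - B_n$ is standard normal and independent of $\mathcal{F}_n$, I would estimate
\[
\mathbb{P}(\tau \geq n+1 \mid \mathcal{F}_n) \leq \mathbb{P}(x + B_{n+1} \in [-r,r] \mid \mathcal{F}_n) = \frac{1}{\sqrt{2\pi}} \int_{-r-y}^{r-y} e^{-s^2/2}\, ds.
\]
Since $|y| \leq r$, the interval $[-r-y, r-y]$ is contained in $[-2r, 2r]$, so the right-hand side is bounded by $q(r)$. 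Integrating over $\{\tau \geq n\}$ yields $\mathbb{P}(\tau \geq n+1) \leq q(r)\, \mathbb{P}(\tau \geq n)$, and a trivial induction starting from $\mathbb{P}(\tau \geq 0) = 1$ gives $\mathbb{P}(\tau \geq n) \leq q(r)^n$ for every $n \in \mathbb{N}$.

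To extend this to arbitrary $t > 0$, I would set $n := \lfloor t \rfloor$, so that $n \geq t - 1$ and $\{\tau \geq t\} \subseteq \{\tau \geq n\}$. Since $q(r) \in (0,1)$, the inequality $n \geq t - 1$ gives $q(r)^n \leq q(r)^{t-1}$, and the desired bound follows. There is no genuine obstacle in this proof; the only point that requires care is the containment $[-r-y, r-y] \subseteq [-2r, 2r]$ valid for $|y| \leq r$, which is precisely what produces the constant $q(r)$ rather than a quantity depending on the current position.
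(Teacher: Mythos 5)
Your proof is correct and follows essentially the same route as the paper: discretize at integer times, observe that survival over each unit interval forces the increment to stay within a window of length $4r$, and obtain the bound $q(r)^{\lfloor t\rfloor}\leq q(r)^{t-1}$. The only cosmetic difference is that you run the one-step estimate through conditioning on $\mathcal{F}_n$ and the Markov property, whereas the paper bounds $\{\tau\geq t\}$ directly by the independent events $\{|B_k-B_{k-1}|\leq 2r\}$, $k=1,\dots,n$; both yield the same constant.
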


\begin{proof}
Let $n$ be the integer part of $t$, we then have
\[\PP(\tau \geq t)\leq \PP( \vert B_k-B_{k-1}\vert\leq 2r, \; \mbox{ for } k=1,..., n)\]
and since $(B_k-B_{k-1})_{k=1,...,n}$ are independent and normally distributed random variables, we immediately get the desired estimate.
\end{proof}

Our first main result then reads as

\begin{thm}
\label{cvgcequant} There exists $C\geq 0$ and $\lambda>0$ such that
\begin{equation}  \label{cvdeu}
\Vert u(t,.)-u_0^{**} \Vert_{L^{\infty}} \leq Ce^{-\lambda t}, \forall t\geq
0,
\end{equation}
and
\begin{equation}  \label{cvdegradu}
\Vert \nabla u(t,.)-\nabla u_0^{**} \Vert_{L^{\infty}} \leq Ce^{-\lambda t},
\forall t\geq 0.
\end{equation}
\end{thm}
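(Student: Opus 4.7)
The plan is to establish the $L^\infty$ estimate (\ref{cvdeu}) first, and then derive the gradient estimate (\ref{cvdegradu}) from it via Lemma \ref{semiconcave}. The lower bound $u(t,x) \geq u_0^{**}(x)$ is immediate from (\ref{repformula}): for any admissible $\sigma$, the stochastic integral $X_t := \int_0^t \sqrt{2}\sigma_s dW_s$ is a centered $L^2$-martingale, so Jensen's inequality applied to the convex $u_0^{**}$ together with $u_0 \geq u_0^{**}$ gives $\E[u_0(x+X_t)] \geq \E[u_0^{**}(x+X_t)] \geq u_0^{**}(x)$. Since Theorem \ref{stochrep} moreover gives $u(t,\cdot) = u_0^{**}$ outside $\overline{B}_{R_0}$, it suffices to prove the matching upper bound for $x \in \overline{B}_{R_0}$.

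To do so, I would exhibit for each such $x$ an admissible control realizing the Carath\'eodory representation of $u_0^{**}(x)$ via an iteration of one-dimensional exits, in the spirit of the Chacon--Walsh Skorokhod embedding. Using (\ref{hypsuru0}) and the super-linear growth of $u_0$, the infimum in (\ref{envconvex}) is attained at points $x_1,\ldots,x_{d+1}$ lying in some uniform ball $\overline{B}_{R_1}$ (independent of $x \in \overline{B}_{R_0}$); the equality case of Jensen's inequality for the convex function $u_0^{**}$ yields moreover $u_0(x_i) = u_0^{**}(x_i)$ for every $i$ with $\lambda_i > 0$, together with $u_0^{**}$ affine on the simplex $\Delta := \conv(x_1,\ldots,x_{d+1})$. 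Starting at $x$, I would pick a unit direction $e_1$ in the affine hull of $\Delta$ and set $\sigma_s = e_1 \otimes e_1$ (so $|\sigma_s|=1$) up to the first time $\tau_1$ at which the process exits the chord $\Delta \cap (x + \R e_1)$; the process is then on a proper sub-face $\Delta_1 \subsetneq \Delta$, and I would iterate with an $\mathcal{F}_{\tau_1}$-measurable direction $e_2$ in the affine hull of $\Delta_1$, and so on. After at most $d$ such steps, at the total stopping time $\tau = \tau_1 + \cdots + \tau_k$, the process has reached a vertex $x_i$ of $\Delta$, and I set $\sigma_s = 0$ for $s \geq \tau$.

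By construction $(X_s)$ is a martingale and $x + X_\tau$ is supported in $\{x_1,\ldots,x_{d+1}\}$ with mean $x$. The affinity of $u_0^{**}$ on $\Delta$ then gives $\E[u_0^{**}(x+X_\tau)] = u_0^{**}(x)$, and since $u_0 = u_0^{**}$ at the vertices, $\E[u_0(x+X_\tau)] = u_0^{**}(x)$ exactly. Each sub-time $\tau_j$ is the exit time of a one-dimensional Brownian motion from a bounded interval of length at most $2(R_0+R_1)$, so Lemma \ref{exittime} together with the strong Markov property gives $\PP(\tau > t) \leq C_1 e^{-\lambda_1 t}$ with constants uniform in $x \in \overline{B}_{R_0}$. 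Plugging this control into (\ref{repformula}):
\begin{equation*}
u(t,x) \leq \E[u_0(x+X_{t\wedge\tau})] = u_0^{**}(x) + \E\bigl[(u_0(x+X_{t\wedge\tau}) - u_0(x+X_\tau))\mathbf{1}_{\tau > t}\bigr] \leq u_0^{**}(x) + Ce^{-\lambda t},
\end{equation*}
the last inequality using that $u_0$ is Lipschitz on the bounded region swept by the paths; this yields (\ref{cvdeu}).

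For (\ref{cvdegradu}), I would apply Lemma \ref{semiconcave} to $w(t,\cdot) := u(t,\cdot) - u_0^{**}$. By Proposition \ref{c11est} and the $C^{1,1}$ regularity of $u_0^{**}$, the function $w(t,\cdot) + M|\cdot|^2$ is convex, so Lemma \ref{semiconcave} applies with constant $2M$. Since moreover $w(t,\cdot) \geq 0$ vanishes outside $\overline{B}_{R_0}$, the lemma gives $\|\nabla w(t,\cdot)\|_{L^\infty} \leq C' (M \|w(t,\cdot)\|_{L^\infty})^{1/2}$, which combined with (\ref{cvdeu}) yields (\ref{cvdegradu}) with rate $\lambda/2$ (relabelled as $\lambda$). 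The hard part will be the iterated Chacon--Walsh construction of $\sigma$: ensuring progressive measurability in $\omega$ and measurability in $x$ (which requires a measurable selection of the Carath\'eodory representation), checking that a generic choice of direction $e_j$ strictly reduces the dimension of the active face at each step, and securing uniform-in-$x$ exponential-tail constants.
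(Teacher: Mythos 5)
Your proposal is correct and follows essentially the same route as the paper: a piecewise rank-one control driving the martingale through the faces of the Carath\'eodory simplex to its vertices, the one-dimensional exit-time bound of Lemma \ref{exittime} for the exponential tail of $\PP(\tau>t)$, and Lemma \ref{semiconcave} combined with uniform semiconcavity for the gradient estimate. The only real difference is in the details you flag as ``the hard part'': where you invoke the equality case of Jensen (affinity of $u_0^{**}$ on the simplex and $u_0=u_0^{**}$ at the vertices) to get $\E[u_0(x+X_\tau)]=u_0^{**}(x)$ and worry about generic choices of direction, the paper instead randomizes the successive directions (uniformly on the sphere of the current face, after an initial wait of length $\eps$) so that each exit a.s.\ lands on a facet of codimension one, and then identifies the exit law on the vertices with the barycentric coordinates $(\lambda_i)$ via the martingale property.
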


\begin{proof}

First let us remark that if $x\notin \overline{B}_{R_0}$, there is nothing to prove. Let us then remark that, thanks to \pref{hypsuru0}, there is some ball $\overline{B}_{R}$ containing $\overline{B}_{R_0}$, such that for any $x\in \overline{B}_{R_0}$, in the formula \pref{envconvex}, it is enough to restrict the minimization to points $x_i$ in $\overline{B}_{R}$. Let then $x\in B_{R_0}$,  let $(x_1,...,x_{d+1})\in \overline{B}_{R}^{d+1}$ and $(\lambda_1,..., \lambda_{d+1})$ be nonnegative such that
\begin{equation}\label{appel}
\sum_{i=1}^{d+1} \lambda_i=1, \; \sum_{i=1}^{d+1} \lambda_i x_i=x, \sum_{i=1}^{d+1} \lambda_i u_0(x_i)=u_0^{**}(x).
\end{equation}
We shall also assume that the points $(x_1,..., x_{d+1})$ are affinely independent (and this is actually without loss of generality for what follows), the coefficients $\lambda_i$ are then uniquely defined and are the unique barycentric coordinates of $x$ in the simplex $K$ which is the convex hull of the points $(x_1,..., x_{d+1})$. We shall also assume that all the coefficients $\lambda_i$ are strictly positive (again this is not a restriction).

\smallskip

Let $\eps>0$ and let $\sigma_s=0$, for $s\in[0,\eps]$, then set
\[v_1:=\frac{W_\eps}{\vert W_\eps\vert},  \tau_1:=\inf\{t\geq \eps \; : \; x+\sqrt{2} v_1\otimes v_1 (W_t-W_\eps)\notin K\}\]
and $\sigma_s=v_1\otimes v_1$ for $s\in(\eps, \tau_1]$. By construction,  $x+\sqrt{2} v_1\otimes v_1 (W_{\tau_1}-W_\eps)$ a.s. belongs to a facet of $K$ of dimension $d-1$. Let us denote by $K_1$ this facet and by $E_1$ the hyperplane parallel to this facet. Let then $v_2$ be ${\cal{F}}_\eps$-measurable and uniformly distributed on $S^{d}\cap E_1$ and define
\[ \tau_2:=\inf\{t\geq \tau_1 \; : \; x+\sqrt{2} v_1\otimes v_1 (W_{\tau_1}-W_\eps) +\sqrt{2} v_2\otimes v_2 (W_t-W_{\tau_1})\notin K_1\}\]
and $\sigma_s=v_2\otimes v_2$ for $x\in (\tau_1, \tau_2]$.

 \smallskip

 We repeat inductively this construction $d$ times and define successive (random and adapted) times $\tau_k$, $k=1,...,d$, directions $v_1,...,v_k$,  and a piecewise constant control $\sigma_s =v_k\otimes v_k$ for $s\in(\tau_{k-1}, \tau_k]$, in such a way that $x+\int_0^{t} \sqrt{2}\sigma_s dW_s$ belongs to  a facet $K_k$ of dimension $d-k$ for $t\in [\tau_k, \tau_{k+1}]$. Let us extend the control $\sigma$ by $0$ after time $\tau_d$  and set
\[Y_t:=x+\sqrt{2} \int_0^t \sigma_s dW_s=Y_{t\wedge \tau_d}.\]
and remark that at time $\tau_d$ the previous process has hit one of the vertices of $K$. By construction $(Y_t)_t$ is a continuous martingale and it is bounded since it takes values in the compact $K$, it therefore converges to $Y_{\tau_d}$ which is a discrete random variables with values in the vertices of $K$, $\{x_1,..., x_{d+1}\}$, we then have
\[\E(Y_{\tau_d})=x=\sum_{i=1}^{d+1} \PP(Y_{\tau_d}=x_i) x_i\]
which implies that  $\PP(Y_{\tau_d}=x_i)=\lambda_i$ by uniqueness of  the barycentric coordinates. We thus have:
\begin{equation}
u_0^{**}(x)=\E(u_0(Y_{\tau_d}))
\end{equation}
and then using the fact that $Y_t$ takes values in $K$ and that $u_0$ is locally Lipschitz:
\begin{eqnarray*}
u(t,x)&\leq \E(u_0(Y_t))\leq \E(u_0(Y_{\tau_d}))+  \Vert \nabla u_0\Vert_{L^{\infty}(K)} \E(\vert Y_t-Y_{\tau_d}\vert)\\
& \leq u_0^{**}(x)+ {\rm{diam}}(K)  \Vert \nabla u_0\Vert_{L^{\infty}(K)} \PP(\tau_d \geq t)
\end{eqnarray*}
We then remark that
\[\{\tau_d\geq t\}\subset \bigcup_{k=1}^d \Big\{T_k \geq \frac{t-\eps}{d}\Big\}\]
where the $T_{k}$'s are the times the process $(Y_s)_s$ spends on the (random) facet  $K_{k-1}$ (setting $K_0=K$), the previous probabilities can therefore be  estimated by the probability that a one-dimensional Brownian motion spends more than $\frac{(t-\eps)}{d}$ time in the interval $[-{\rm{diam}}(K), {\rm{diam}}(K)]$. Using lemma \ref{exittime}, we thus get
\[\PP(\tau_d\geq t)\leq Me^{-\lambda(t-\eps)}\]
for constants $M$ and $\lambda>0$ that depend only on $d$ and ${\rm{diam}}(K)$. Letting $\eps\to 0$, we then obtain
\[u(t,x)\leq u_0^{**}(x)+ {\rm{diam}}(K)  \Vert \nabla u_0\Vert_{L^{\infty}(K)} Me^{-\lambda t}\]
since we already know that $u(t,.)\geq u_0^{**}$, this terminates the proof of \pref{cvdeu}.

\smallskip

Finally, the estimate \pref{cvdegradu} easily follows from \pref{cvdeu}, lemma \ref{semiconcave} and the fact that $u(t,.)-u_0^{**}$ remains uniformly semiconcave thanks to lemma \ref{semiconcavecest}.

\end{proof}

\begin{rem}
Let us remark that in the inequality (\ref{cvdeu}) in theorem \ref%
{cvgcequant}, the constant $\lambda$ only depends on the dimension and the
diameter of the \emph{faces} of the convex envelope on the set where $%
\{u_0>u_0^{**}\}$ whereas the constant $C$ also depends on the Lipschitz
constant of $u_0$ on the set of such faces. In (\ref{cvdegradu}), $C$ also
depends on $\Vert D^2 u_0\Vert_{L^{\infty}}$. Note that the fact that $u_0$
is $C^{1,1}$ is not necessary to obtain (\ref{cvdeu}), it will however be
essential for the convergence of trajectories of the gradient flow
introduced in the next section.
\end{rem}

\section{A non-autonomous gradient flow for global minimization}

\label{nagf}

In this final section, we apply the previous results to prove convergence
results for the Cauchy problem fo the non-autonomous gradient flow:
\begin{equation}  \label{gf}
\dot{x}(t)=-\nabla u(t,x(t)), \; t>0, \; x(0)=x_0
\end{equation}
where $x_0\in \mathbb{R}^d$ is an arbitrary initial position. Thanks to
proposition \ref{c11est}, the previous Cauchy problem possesses a unique
solution that is defined for all positive times. Our second main result is
then the following:

\begin{thm}
Let $x_0\in \mathbb{R}^d$, and let $x(.)$ be the solution of the Cauchy
problem (\ref{gf}), then $x(t)$ converges as $t\to \infty$ to some point $%
y_\infty$ that is a (global) minimum of $u_0^{**}$.
\end{thm}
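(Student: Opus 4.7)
The approach is to use $u(t,x(t))$ as a Lyapunov function to produce a cluster point of $x(t)$ that must be a minimum of $u_{0}^{**}$, and then to upgrade this subsequential convergence to full convergence using the convexity of $u_{0}^{**}$ together with the exponential decay from Theorem \ref{cvgcequant}.

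First, by Remark \ref{rem1} and Proposition \ref{c11est}, $t\mapsto u(t,x)$ is nonincreasing and Lipschitz, so $\partial_{t}u\leq 0$ almost everywhere, and a chain-rule computation gives
\[ \frac{d}{dt}u(t,x(t)) \;=\; \partial_{t}u(t,x(t))-|\nabla u(t,x(t))|^{2} \;\leq\; -|\nabla u(t,x(t))|^{2}. \]
Since $u(t,x(t))\geq u_{0}^{**}(x(t))\geq \inf u_{0}^{**}>-\infty$ by coercivity of $u_{0}^{**}$, integrating in time yields $\int_{0}^{\infty}|\nabla u(t,x(t))|^{2}\,dt<\infty$; moreover the bound $u_{0}^{**}(x(t))\leq u(t,x(t))\leq u_{0}(x_{0})$ combined with coercivity confines $x(\cdot)$ to a fixed compact set. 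It follows that there is a sequence $t_{n}\to\infty$ with $|\nabla u(t_{n},x(t_{n}))|\to 0$, and Theorem \ref{cvgcequant} then forces $|\nabla u_{0}^{**}(x(t_{n}))|\to 0$. Extracting a further subsequence $x(t_{n_{k}})\to x_{\infty}$ and using continuity of $\nabla u_{0}^{**}$, I obtain $\nabla u_{0}^{**}(x_{\infty})=0$, which by convexity of $u_{0}^{**}$ means $x_{\infty}\in\argmin u_{0}^{**}$.

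To upgrade this to global convergence of the whole trajectory, I would use $|x(t)-x_{\infty}|^{2}$ as a Lyapunov quantity. Adding and subtracting $\nabla u_{0}^{**}(x(t))$ inside
\[ \frac{1}{2}\frac{d}{dt}|x(t)-x_{\infty}|^{2} \;=\; -(x(t)-x_{\infty})\cdot\nabla u(t,x(t)), \]
the convexity of $u_{0}^{**}$ together with $\nabla u_{0}^{**}(x_{\infty})=0$ makes the $\nabla u_{0}^{**}(x(t))$-piece nonpositive, while \pref{cvdegradu} controls the remainder by $Ce^{-\lambda t}|x(t)-x_{\infty}|$. This gives (almost everywhere, which is enough since $|x(t)-x_{\infty}|$ is locally Lipschitz in $t$) $\frac{d}{dt}|x(t)-x_{\infty}|\leq Ce^{-\lambda t}$, so that $t\mapsto|x(t)-x_{\infty}|+(C/\lambda)e^{-\lambda t}$ is nonincreasing and hence converges as $t\to\infty$. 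Since $|x(t_{n_{k}})-x_{\infty}|\to 0$ along the subsequence, this limit must be $0$, proving $x(t)\to x_{\infty}$. The only technical nuisance is differentiating $|x(t)-x_{\infty}|$ at times when the trajectory visits $x_{\infty}$; this is handled by working throughout with the squared distance and invoking absolutely continuous calculus, and constitutes a routine issue rather than a genuine obstacle.
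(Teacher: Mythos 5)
Your proof is correct, but the first half takes a genuinely different route from the paper. The paper never uses $u(t,x(t))$ as a Lyapunov function: it works with the full set $F=\argmin u_0^{**}$, shows that $t\mapsto |x(t)-y|+\frac{C}{\lambda}e^{-\lambda t}$ is nonincreasing for \emph{every} $y\in F$ (exactly your quasi-Fej\'er computation), deduces that $d(x(t),F)$ has a limit $d_\infty$, rules out $d_\infty>0$ by a compactness/contradiction argument based on the strict positivity of $\langle \nabla u_0^{**}(x)-\nabla u_0^{**}(y),x-y\rangle$ off $F$, and finally identifies the limit point by an Opial-type argument comparing two cluster points $y_1,y_2\in F$. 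You instead produce a cluster point in $F$ directly from the energy estimate $\frac{d}{dt}u(t,x(t))\leq -|\nabla u(t,x(t))|^2$, which yields $\int_0^\infty |\nabla u(t,x(t))|^2\,dt<\infty$ and hence a sequence along which $\nabla u_0^{**}(x(t_n))\to 0$; this is arguably cleaner, as it bypasses both the $\delta>0$ contradiction step and the two-cluster-point argument, and it only invokes the Fej\'er monotonicity once, for the single point $x_\infty$. The one place you should be more careful is the chain rule: $u$ is only Lipschitz (not $C^1$) in $t$, so the identity $\frac{d}{dt}u(t,x(t))=\partial_t u(t,x(t))-|\nabla u(t,x(t))|^2$ is not immediate along the moving point. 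This is easily repaired: split the increment as $[u(t+h,x(t+h))-u(t,x(t+h))]+[u(t,x(t+h))-u(t,x(t))]$; the first bracket is $\leq 0$ for $h>0$ by monotonicity of $u$ in $t$ (Remark \ref{rem1}), and the second divided by $h$ converges to $-|\nabla u(t,x(t))|^2$ because $u(t,\cdot)\in C^{1,1}$; since $t\mapsto u(t,x(t))$ is Lipschitz, hence absolutely continuous, the resulting a.e.\ differential inequality integrates as you need. With that point made explicit, your argument is complete.
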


\begin{proof}
Let us denote by $F$ the (convex and compact) set where $u_0^{**}$ attains its minimum.
Let $y\in F$,  since $\nabla u_0^{**}(y)=0$, using the convexity of $u_0^{**}$ and \pref{cvdegradu}, we get
\[\begin{split}
\frac{d}{dt} \Big( \frac{1}{2} \vert x(t)-y \vert^2\Big) & =\<\nabla u_0^{**}(y)-\nabla u_0^{**}(x(t)), x(t)-y>+\\ &\<\nabla u_0^{**} (x(t))-\nabla u(t, x(t)), x(t)-y>\\
&\leq Ce^{-\lambda t} \vert x(t)-y\vert.
\end{split}\]
From which we easily deduce that $\vert x(t)-y\vert+\frac{C}{\lambda} e^{-\lambda t}$  is nondecreasing so that $\vert x(t)-y\vert$ converges as $t\to +\infty$. There exists therefore some $d_\infty\geq 0$ such that
\[\ d(x(t), F):=\min_{y\in F} \vert x(t)-y\vert \to d_\infty \mbox{ as } t\to \infty.\]
Now we claim that $d_\infty=0$; assume on the contrary that  $d_\infty >0$ and let $y\in F$, we then have
\[\delta:=\min \{ \<\nabla u_0^{**}(x)-\nabla u_0^{**}(y), x-y>   \; :  \;   d(x, F)=d_{\infty} \} >0 \]
so that by the same computations as above, we obtain that for large enough $t$, one has
\[ \frac{d}{dt}   \Big( \frac{1}{2} \vert x(t)-y \vert^2\Big) \leq -\frac{\delta}{2}\]
which contradicts the convergence of  $\vert x(t)-y\vert$  as $t\to +\infty$. We thus have proved that $d(x(t), F)\to 0$ as $t\to +\infty$ so that all limit points of the trajectory $x(.)$ belong to $F$. Let $y_1=\lim_n x(t_n)$ and $y_2=\lim_n x(s_n)$ with $t_n, s_n \to \infty$ be two such limit points, since $\vert x(t)-y_i\vert$ converges as $t\to \infty$ for $i=1,2$, we deduce that $\vert y_1-y_2\vert=\lim_n \vert x(t_n)-y_2\vert=\lim_n \vert x(s_n)-y_2\vert=0$. Together with the compactness of $F$, this proves that $x(t)$ converges to some $y_\infty \in F$ as $t\to \infty$.
\end{proof}

\begin{rem}
Note that in the previous convergence result, the fact that $u(t,.)$ solves (%
\ref{cee}) or equivalently is given by (\ref{repformula}) is not important
and not even the full force of the exponential convergence is really needed.
What really matters is
\begin{equation*}
\Vert \nabla u(t,.)-\nabla u_{0}^{\ast \ast }\Vert _{L^{\infty }}%
\mbox{ is
integrable}.
\end{equation*}%
Any approximation that satisfies this requirement will lead to a
non-autonomous gradient flow whose trajectories converge to minimizers of $%
u_{0}^{\ast \ast }$.
\end{rem}

%\section{Remarks and conclusion}

%We conclude with some informal remarks in order to shed complementary lights
%on the method.

\end{document}